\documentclass[11pt,a4paper]{article}

\usepackage[a4paper,top=27mm,bottom=28mm,left=29mm,right=29mm]{geometry}
\usepackage[T1]{fontenc}
\usepackage[utf8]{inputenc}
\usepackage{amsmath,amssymb,amsthm,mathtools}
\usepackage{amsmath,amssymb}
\usepackage{newtxtext,newtxmath}
\usepackage[final]{microtype}
\usepackage{setspace}
\usepackage{indentfirst}
\usepackage{authblk}
\usepackage{booktabs,array}
\usepackage{enumitem}
\allowdisplaybreaks[2]
\numberwithin{equation}{section}
\newtheorem{theorem}{Theorem}
\newtheorem{conjecture}{Conjecture}
\newtheorem{lemma}{Lemma}[section]
\theoremstyle{remark}

\usepackage{cite}
\usepackage[hidelinks]{hyperref}
\hypersetup{
	pdftitle={A Congruence Conjecture of Z.-W. Sun for Reciprocal Central Binomial Sums},
	pdfauthor={Dian-Wang Hu},
	pdfsubject={Number theory and supercongruences}
}

\newcommand{\dd}{\,\mathrm{d}}

\DeclareMathOperator*{\Res}{Res}

\usepackage{titlesec}
\titleformat{\section}[block]
{\centering\normalfont\bfseries\scshape}
{\thesection.}{0.55em}{}
\titlespacing*{\section}{0pt}{2.7ex plus .7ex minus .2ex}{1.4ex plus .2ex}
\titleformat{\subsection}[block]
{\normalfont\bfseries}
{\thesubsection}{0.65em}{}
\titlespacing*{\subsection}{0pt}{2.2ex plus .5ex minus .2ex}{0.9ex plus .2ex}

\makeatletter
\renewenvironment{abstract}{%
	\begin{list}{}{\leftmargin=0pt\rightmargin=0pt}%
		\item\relax\small\noindent\textbf{Abstract.}\ }%
	{\end{list}}
\makeatother

\title{\large\bfseries A Congruence Conjecture of Z.-W. Sun for Reciprocal Central Binomial Sums}
\author{Dian-Wang Hu}
\date{}

\begin{document}
	\maketitle
	\vspace{-2.2em}
	
	\begin{abstract}
		Let $p>5$ be a prime. We prove a conjecture of Z.-W. Sun asserting that
		\[
		\sum_{k=1}^{p-1}\frac{1}{k^4\binom{2k}{k}}
		\equiv \frac{H_{p-1}}{p^3}-\frac{7}{45}pB_{p-5}\pmod{p^2}.
		\]
	    As a consequence, we obtain
		\begin{equation*}
			\sum_{k=1}^{p-1}
			\frac{H_{k-1}^{(2)}}{k^2\binom{2k}{k}}
			\equiv
			\frac{H_{p-1}}{3p^3}
			+\frac{26}{135}pB_{p-5}
			\pmod{p^2},
		\end{equation*}
		an equivalent conjecture of K. Hessami Pilehrood and T. Hessami Pilehrood. 
		Here $H_n^{(m)}$ denotes the generalized harmonic numbers of order $m$,
		with $H_n=H_n^{(1)}$, and $B_n$ denotes the $n$th Bernoulli number.
		The proof combines $p$-adic congruences involving harmonic numbers,
		finite-sum identities, and complex residue calculations via the residue theorem.
	\end{abstract}
	
	\noindent\textbf{Keywords:} central binomial coefficients; supercongruences; harmonic sums; Bernoulli numbers
	
	\medskip
	\section{Introduction}
	For integers $n\ge0$ and $m\ge1$, the generalized harmonic numbers of order $m$ are defined by
	\[
	H_n^{(m)}=\sum_{k=1}^n\frac{1}{k^m},
	\qquad H_n:=H_n^{(1)}.
	\]
	Throughout the paper, an empty sum is understood to be zero.
	The Bernoulli numbers $B_n$ are defined by the generating function
	\[
	\frac{x}{e^x-1}
	=\sum_{n=0}^{\infty}B_n\frac{x^n}{n!}.
	\]
	
	In 1978, Ap\'ery \cite{VanderPoorten} employed the rapidly convergent series
	\begin{equation}\label{eq:apery}
		\sum_{k=1}^{\infty}
		\frac{(-1)^{k-1}}{k^3\binom{2k}{k}}
		=\frac{2}{5}\zeta(3)
	\end{equation}
   to prove the irrationality of $\zeta(3)$. A similar approach also provides
   an alternative proof of the irrationality of $\zeta(2)$ through
   	\begin{equation}\label{eq:zeta2}
		\sum_{k=1}^{\infty}
		\frac{1}{k^2\binom{2k}{k}}
		=\frac{1}{3}\zeta(2).
	\end{equation}
	For any prime $p>5$, Tauraso \cite{Tauraso}  proved the following finite $p$-adic analogues
	of \eqref{eq:apery} and \eqref{eq:zeta2}:
	\[
	\sum_{k=1}^{p-1}
	\frac{(-1)^{k-1}}{k^3\binom{2k}{k}}
	\equiv
	\frac{2H_{p-1}}{5p^2}
	\pmod{p^3}
	\qquad
	and
	\qquad
	\sum_{k=1}^{p-1}
	\frac{1}{k^2\binom{2k}{k}}
	\equiv
	\frac{H_{p-1}}{3p}
	\pmod{p^3}.
	\]
	
	Inspired by 
	\[
	\sum_{k=1}^{\infty}
	\frac{1}{k^4\binom{2k}{k}}
	=\frac{17}{36}\zeta(4)
	\]
	(see \cite[p.~89]{Comtet}), Z.-W. Sun conjectured the following $p$-adic analogue
	in [4, Conjecture 1.1], which is the main result of this paper.
	\begin{theorem}\label{thm:main}
		Let $p>5$ be a prime. Then
		\begin{equation}\label{eq:sun}
			\sum_{k=1}^{p-1}
			\frac{1}{k^4\binom{2k}{k}}
			\equiv
			\frac{H_{p-1}}{p^3}-\frac{7}{45}pB_{p-5}
			\pmod{p^2}.
		\end{equation}
	 \end{theorem}
	 
	K.~Hessami Pilehrood and T.~Hessami Pilehrood \cite[Theorem~2]{pilehrood2012} 
	proved the congruence  \eqref{eq:sun} modulo $p$.
	For any prime $p>5$, they further obtained
	\[
	4\sum_{k=1}^{p-1}
	\frac{1}{k^4\binom{2k}{k}}
	-
	3\sum_{k=1}^{p-1}
	\frac{H_{k-1}^{(2)}}{k^2\binom{2k}{k}}
	\equiv
	\frac{3H_{p-1}}{p^3}
	-\frac{6}{5}pB_{p-5}
	\pmod{p^2}.
	\]
	This led them to propose the following conjecture \cite[Conjecture~1]{pilehrood2012}, which is equivalent to Theorem~\ref{thm:main}, together with the separately verified
	case $p=5$.
  	\begin{conjecture}\label{conj:hp}
		Let $p>3$ be a prime. Then
		\begin{equation}\label{eq:hp}
			\sum_{k=1}^{p-1}
			\frac{H_{k-1}^{(2)}}{k^2\binom{2k}{k}}
			\equiv
			\frac{H_{p-1}}{3p^3}
			+\frac{26}{135}pB_{p-5}
			\pmod{p^2}.
		\end{equation}
	\end{conjecture}
   
   The following congruence for the half-range sum plays a crucial role in the proof of Theorem~\ref{thm:main} and may also be of independent interest.
       \begin{theorem}\label{thm:half}
		Let $p>3$ be a prime. Then
		\begin{equation}\label{eq:half}
			\begin{aligned}
				\sum_{j=1}^{(p-1)/2}
				\frac{\binom{2j}{j}}{j^3}
				\left(
				\frac{1}{j^2}+H_j^{(2)}
				\right)
				&\equiv
				-2\sum_{j=1}^{(p-1)/2}
				\frac{p}
				{j^4\binom{2p-2j}{p-j}}
				\left(
				\frac{1}{j^2}+H_j^{(2)}
				\right)  \\
				&\equiv
				\frac{8}{9}B_{p-5}
				\pmod p.
			\end{aligned}
		\end{equation}
	\end{theorem}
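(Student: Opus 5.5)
The statement has two parts. The first congruence is a pointwise identity, term by term. The second needs the real work: we must evaluate the half-range sum $\sum_{j\le (p-1)/2}\binom{2j}{j}j^{-3}\bigl(j^{-2}+H_j^{(2)}\bigr)$ modulo $p$.

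\emph{First congruence.} For $1\le j\le (p-1)/2$ we use the standard relation
\[
\binom{2p-2j}{p-j}\equiv -\frac{2p}{j\binom{2j}{j}}\cdot\frac{1}{1}\pmod{p^2}\quad\text{(up to the unit factor)},
\]
which reads, more precisely, $\frac{p}{\binom{2p-2j}{p-j}}\equiv -\frac{j\binom{2j}{j}}{2}\pmod p$. To prove it, write
\[
\binom{2p-2j}{p-j}=\frac{(2p-2j)!}{(p-j)!^2}.
\]
The numerator contains exactly one factor $p$, and we compare the remaining factors with $(2j-1)!$ via $k\mapsto p-k$, using $(p-1-m)!\,m!\equiv(-1)^{m+1}\pmod p$. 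Multiplying this relation by $j^{-4}\bigl(j^{-2}+H_j^{(2)}\bigr)$ gives the first congruence termwise. Note that $j^{-2}+H_j^{(2)}$ is $p$-integral because $j<p$.

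\emph{Second congruence.} We split the sum into
\[
S_1=\sum_{j\le n}\frac{\binom{2j}{j}}{j^5},\qquad S_2=\sum_{j\le n}\frac{\binom{2j}{j}}{j^3}H_j^{(2)},\qquad n=\frac{p-1}{2}.
\]
\begin{itemize}
\item First use $\binom{2j}{j}\equiv\binom{n}{j}(-4)^j\pmod p$.
\item This turns $S_1$ and $S_2$ into binomial sums $\sum_j\binom{n}{j}(-4)^j f(j)$.
\item Alternatively, extend the sums to $j\le p-1$, since $\binom{2j}{j}\equiv0$ for $n<j<p$.
\end{itemize}
For $S_1$ the plan is to use the generating function $\sum_j\binom{n}{j}x^j/j$ iterated, or known results (Tauraso, Sun) for $\sum\binom{2j}{j}/j^k$ modulo $p$. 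These express $S_1$ through $q_p(2)$-type terms and Bernoulli numbers; for odd weight $5$ I expect a multiple of $B_{p-5}$.

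For $S_2$ I would apply Abel summation. Write $H_j^{(2)}=\sum_{i\le j}i^{-2}$ and swap the order, so that
\[
S_2=\sum_{i\le n} i^{-2}\sum_{i\le j\le n}\frac{\binom{2j}{j}}{j^3}.
\]
The inner tail sums can be handled through the reflection $j\mapsto p-j$, or through the first congruence, which converts $\binom{2j}{j}$ into $1/\binom{2p-2j}{p-j}$. This links $S_2$ to full-range sums $\sum_{k<p}\frac{H^{(2)}_{k-1}}{k^2\binom{2k}{k}}$-type expressions. Those can be reduced modulo $p$ to multiple harmonic sums $H(a,b;p-1)$ and $H(a,b;n)$ of weight $5$.

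\emph{Main obstacle.} The hardest step is the evaluation of the weight-$5$ double harmonic sums over the half range, such as $\sum_{i<j\le n} i^{-2}j^{-3}$ and the analogous sums with $2^j$- or binomial weights, in terms of $B_{p-5}$. Here I would use the known results for $H_n^{(k)}$ modulo $p$ (for example $H_n^{(4)}\equiv0$ and $H_n^{(3)}\equiv -2B_{p-3}$), together with Zhao's and Tauraso's evaluations of depth-two half-range sums. The alternative is to realize each sum as a residue computation: consider
\[
F(z)=\frac{\Gamma\text{-type kernel}}{z^3}\prod_{i\le n}\Bigl(1-\frac{z^2}{i^2}\Bigr)^{-1},
\]
sum its residues at $z=\pm j$, and extract the $p$-adic content. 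The residue theorem gives a finite identity for $\sum_j \binom{2j}{j}\binom{n}{j}$-type weighted sums with $H^{(2)}$ factors, and reducing that identity modulo $p$ should produce $\tfrac{8}{9}B_{p-5}$.
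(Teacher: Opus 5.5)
Your argument for the first congruence is fine. The factorial computation you sketch is exactly the congruence $j\binom{2j}{j}\binom{2p-2j}{p-j}\equiv-2p\pmod{p^2}$ that the paper quotes from Sun, and multiplying by $j^{-4}(j^{-2}+H_j^{(2)})$ gives the first congruence termwise.

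The genuine gap is the second congruence, which is the whole content of the theorem. Your proposal never evaluates $S=\sum_{j\le(p-1)/2}\binom{2j}{j}j^{-3}(j^{-2}+H_j^{(2)})$ modulo $p$. Phrases like ``I expect a multiple of $B_{p-5}$'' and ``should produce $\frac89B_{p-5}$'' stand in for the proof, and the concrete routes you list do not close it:
\begin{enumerate}
\item The reflection $j\mapsto p-j$ only rewrites $S$ as $-2p\sum_{k=1}^{p-1}k^{-4}\binom{2k}{k}^{-1}\bigl(k^{-2}-H_{k-1}^{(2)}\bigr)$ modulo $p$. Section~4 of the paper evaluates exactly this quantity \emph{by means of} the present theorem, so that route is circular.
\item The substitution $\binom{2j}{j}\equiv\binom{n}{j}(-4)^j$ does not lead to ordinary multiple harmonic sums $H(a,b;n)$ or $H(a,b;p-1)$. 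Iterating $\sum_k\binom{n}{k}x^k/k=\sum_{i\le n}\bigl((1+x)^i-1\bigr)/i$ produces sums weighted by $(-3)^i$. These carry cube-root-of-unity information, and you neither state nor prove any result evaluating such weight-five weighted sums.
\item Abel summation leaves tail sums $\sum_{i\le j\le n}\binom{2j}{j}j^{-3}$, for which you give no evaluation modulo $p$.
\item The residue alternative is not specified: you do not say which kernel, why its residue at the origin is $S$, or how the other residues would be reduced.
\end{enumerate}
The split $S=S_1+S_2$ also works against you. A direct computation gives $S_1\equiv0\pmod p$ for $p=13,17,19$, but $S_1\equiv4\pmod 7$ and $S_1\equiv8\pmod{11}$. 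So $S_1$ alone behaves irregularly at small primes, and on this evidence all of $\frac89B_{p-5}$ would have to come from $S_2$. You would need two separate weight-five evaluations instead of one.

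The missing idea is a generating function for $S$ as a whole. The combination $j^{-2}+H_j^{(2)}$ is the logarithmic derivative at $z=0$ of $\varphi_j(z)=(j^2-z)^{-2}\prod_{k<j}(k^2-z)^{-1}$. Hence, with $F(z)=2\sum_j(2j-1)!\,\varphi_j(z)$, one has $S=\Res_{z=0}F(z)/z^2$. Since $F(z)/z^2=O(z^{-4})$, $S$ is minus the sum of the residues at $z=t^2$.

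These residues involve $H_{2t}$ and the sums $\sum_{j>t}\frac{1}{j(j^2-t^2)}\binom{2j}{j-t}$. Writing $t=\frac{p-1}{2}-m\equiv-\frac{2m+1}{2}$ turns the latter into the exact binomial sums $A_m,B_m,C_m$ of Lemma~\ref{lem:roots}, which have closed forms in $\xi=e^{\pi i/3}$. The $H_{2t}$ terms cancel, leaving
\[
S\equiv-\tfrac52\sum_t(-1)^tt^{-5}-\tfrac92\sum_{t\equiv-p\ (\mathrm{mod}\ 3)}(-1)^tt^{-5}\pmod p .
\]
Lemma~\ref{lem:harmonic} together with $H^{(5)}_{(p-1)/2}\equiv-6B_{p-5}$ then gives $\frac89B_{p-5}$. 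None of these ingredients appears in your plan: the kernel, the $A_m,B_m,C_m$ identities, and the mod-$3$ restricted alternating sum.
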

	
    Section~2 collects the congruences involving harmonic numbers and the
	finite-sum identities needed in the proofs of the theorems.
	In Section~3 we prove
	Theorem~\ref{thm:half} using the residue theorem. Theorem~\ref{thm:main}
	is then proved in Section~4.

\medskip
\section{Preliminaries}
\subsection{Congruences involving harmonic numbers}
	Let $p>3$ be a prime. We recall the following congruences from \cite[Theorem~5.1 and Corollary~5.2]{sun2000}:
	\begin{equation}\label{eq:Wolstenholme}
	H_{p - 1} \equiv 0\pmod{p^2},
    \end{equation}
   \begin{equation}\label{eq:H25}
 	H_{p - 1}^{(2)} \equiv
 	H_{p - 1}^{(5)} \equiv 0\pmod{p},
   \end{equation}
  \begin{equation}\label{eq:2.3}
  	H_{p-1}^{(3)} \equiv - \frac{6p^{2}}{5}B_{p - 5}\pmod{p^3},
  \end{equation}
\begin{equation}\label{eq:2.4}
	H_{(p-1)/2}^{(5)} \equiv - 6B_{p - 5}\pmod{p}.
\end{equation}
 For $1\le k\le p-1$, the following $p$-adic expansion holds:
\[
\frac1k\left(1-\frac pk\right)^{-1}
\equiv
\frac1k+\frac p{k^2}+\frac{p^2}{k^3}
+\frac{p^3}{k^4}+\frac{p^4}{k^5}
\pmod{p^5}.
\]
Summing this congruence over $k=1,\ldots,p-1$, we obtain
\begin{equation*}
\begin{aligned}
- H_{p - 1}
&= - \sum_{k = 1}^{p - 1}\frac{1}{p - k}
 = \sum_{k = 1}^{p - 1}\frac{1}{k}\left( 1 - \frac{p}{k} \right)^{- 1}\\
&\equiv H_{p - 1} + pH_{p - 1}^{(2)} + p^{2}H_{p - 1}^{(3)}
 + p^{3}H_{p - 1}^{(4)} + p^{4}H_{p - 1}^{(5)}\pmod{p^5}.
\end{aligned}
\end{equation*}
Combining this with \eqref{eq:H25}, we obtain
\begin{equation}\label{eq:2.5}
\frac{2H_{p - 1}}{p^{2}} + H_{p - 1}^{(3)} \equiv - \frac{H_{p - 1}^{(2)}}{p} - pH_{p - 1}^{(4)}\pmod{p^3}.
\end{equation}

\begin{lemma}\label{lem:harmonic}
Let $p>3$ be a prime. Then the following congruences hold:
\begin{equation}\label{eq:2.6}
\sum_{j = 1}^{(p-1)/2}\frac{(-1)^{j}}{j^{5}} \equiv \frac{1}{2^{5}}H_{(p-1)/2}^{(5)}
\pmod{p},
\end{equation}
\begin{equation}\label{eq:2.7}
\sum_{\substack{1 \leq j \leq (p-1)/2 \\ j \equiv - p\pmod{3}}}\frac{(-1)^{j}}{j^{5}} 
\equiv  \frac{121}{6^{5}}H_{(p-1)/2}^{(5)}
\pmod{p}.
\end{equation}
\end{lemma}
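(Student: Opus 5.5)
The plan is to prove both congruences by elementary manipulations modulo $p$. The reflection $j\mapsto p-j$ will double the half-range sums into full-range sums. Those full-range sums will then be reduced either to $H^{(5)}_{(p-1)/2}$ directly, or to generalized harmonic sums $H^{(5)}_{\lfloor p/m\rfloor}$ with $m\in\{2,3,6\}$. Throughout we use that $1/(p-j)^5\equiv -1/j^5\pmod p$, and that for odd $p$ we have $(-1)^{p-j}=-(-1)^j$.

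For \eqref{eq:2.6}, the term indexed by $p-j$ is $(-1)^{p-j}/(p-j)^5\equiv(-1)^j/j^5\pmod p$. Hence the full sum $\sum_{j=1}^{p-1}(-1)^j/j^5$ is congruent to twice the half-range sum. On the other hand, splitting the full sum by parity gives
\[
\sum_{j=1}^{p-1}\frac{(-1)^j}{j^5}=2\sum_{\substack{j<p\\ j \text{ even}}}\frac1{j^5}-H^{(5)}_{p-1}=\frac{2}{2^5}H^{(5)}_{(p-1)/2}-H^{(5)}_{p-1}.
\]
By \eqref{eq:H25} this is $\equiv \frac1{16}H^{(5)}_{(p-1)/2}\pmod p$. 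Halving yields \eqref{eq:2.6}.

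For \eqref{eq:2.7}, first note that the condition $j\equiv -p\pmod 3$ is preserved by $j\mapsto p-j$, since $p-j\equiv 2p\equiv -p\pmod 3$. The same pairing therefore shows that the half-range sum is congruent to half of the full sum $T$ over $1\le j\le p-1$ with $j\equiv -p\pmod 3$. Next write $j=3k-p$, where $k$ runs over the integers in $(p/3,2p/3)$. Then $1/j^5\equiv 1/(3k)^5\pmod p$ and $(-1)^j=-(-1)^k$, so
\[
T\equiv-\frac{1}{3^5}\sum_{p/3<k<2p/3}\frac{(-1)^k}{k^5}\pmod p.
\]
Applying the reflection $k\mapsto p-k$ once more, the alternating sum over $(p/3,2p/3)$ equals the full alternating sum minus twice the sum over $k<p/3$. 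The full alternating sum is $\equiv\frac1{16}H^{(5)}_{(p-1)/2}$ by the previous step. Splitting by parity, the part with $k<p/3$ is
\[
\frac{2}{2^5}H^{(5)}_{\lfloor p/6\rfloor}-H^{(5)}_{\lfloor p/3\rfloor}.
\]
Thus everything is reduced to $H^{(5)}_{\lfloor p/m\rfloor}$ for $m=2,3,6$.

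The main obstacle is evaluating $H^{(5)}_{\lfloor p/3\rfloor}$ and $H^{(5)}_{\lfloor p/6\rfloor}$ modulo $p$ as rational multiples of $B_{p-5}$, equivalently of $H^{(5)}_{(p-1)/2}$ via \eqref{eq:2.4}. The plan is to take these from the same source \cite{sun2000}, where Z.-H. Sun expresses $\sum_{k<p/m}k^{p-1-r}$ through Bernoulli polynomials at rational arguments and obtains Lehmer-type values for $m=2,3,4,6$. For $m=2$ this recovers \eqref{eq:2.4}. For $m=3,6$, with $r=5$, one gets constants involving $3^5-3$ and $(2^5-1)(3^5-1)$, each over a multiple of $5$, times $B_{p-5}$. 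Should the exact form be in doubt, they can be rederived from $\sum_{k=0}^{\lfloor p/m\rfloor}k^{p-5}=\frac{B_{p-4}(\lfloor p/m\rfloor+1)-B_{p-4}}{p-4}$ together with the standard values of $B_n(1/3)$ and $B_n(1/6)$ in terms of $B_n$. Substituting these values and simplifying the rational coefficient should give exactly $\frac{121}{6^5}H^{(5)}_{(p-1)/2}$. This final arithmetic check is where care is needed, and it also serves as a consistency test on the constants used.
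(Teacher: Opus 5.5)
Your proof of \eqref{eq:2.6} is exactly the paper's argument. For \eqref{eq:2.7} your route is correct but genuinely different from the paper's.

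\textbf{The paper's route.} The paper never evaluates $H^{(5)}_{\lfloor p/3\rfloor}$ or $H^{(5)}_{\lfloor p/6\rfloor}$. It writes $6j\equiv\varepsilon_j r_j\pmod p$ with $1\le r_j\le (p-1)/2$ and notes that $j\mapsto r_j$ is a permutation, so $6^{-5}H^{(5)}_{(p-1)/2}\equiv\sum_r\mu_r/r^5$. A residue check modulo $6$ gives $\mu_r=-(-1)^r$ when $r\equiv -p\pmod 3$, and $\mu_r=(-1)^r$ otherwise. Subtracting from \eqref{eq:2.6} then gives
$2\sum_{r\equiv-p}(-1)^r/r^5\equiv(2^{-5}-6^{-5})H^{(5)}_{(p-1)/2}$ directly. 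This uses only $H^{(5)}_{p-1}\equiv0$ and works uniformly for all $p>3$.

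\textbf{Your route.} Each step of your reduction is sound: the reflection preserves $j\equiv-p\pmod 3$, the substitution $j=3k-p$ is valid, and so are the folding of $(p/3,2p/3)$ and the parity split. You left the final arithmetic unchecked, but it does close. The needed values are
$H^{(5)}_{\lfloor p/3\rfloor}\equiv-24B_{p-5}\equiv4H^{(5)}_{(p-1)/2}$ and $H^{(5)}_{\lfloor p/6\rfloor}\equiv-750B_{p-5}\equiv125H^{(5)}_{(p-1)/2}$. With these,
$T\equiv-3^{-5}\bigl(\tfrac1{16}-\tfrac{125}{8}+8\bigr)H^{(5)}_{(p-1)/2}=\tfrac{121}{2^4\cdot3^5}H^{(5)}_{(p-1)/2}$, and halving gives exactly $\tfrac{121}{6^5}H^{(5)}_{(p-1)/2}$.

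\textbf{Comparison.} Your approach costs more: it imports Bernoulli-polynomial values at $1/3$ and $1/6$. These come from the multiplication formula, which is the analytic counterpart of the paper's permutation. That evaluation also requires $p>5$, since it divides by $p-5$. So $p=5$ must be checked by hand; both sides are then $\equiv-1\pmod 5$.

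\textbf{A slip in your fallback.} Since $1/k^5\equiv k^{p-6}$, not $k^{p-5}$, the correct identity is $\sum_{k=1}^{N}k^{p-6}=\frac{B_{p-5}(N+1)-B_{p-5}}{p-5}$. Your version with $B_{p-4}$ computes $H^{(4)}_N$ instead. Here is how the correct identity produces the values above.
\begin{itemize}
\item Write $p=mN+s$ with $0<s<m$. Then $N+1\equiv1-s/m\pmod p$.
\item $B_{p-5}(x)$ is $p$-integral, and $B_n(1-x)=B_n(x)$ for even $n$. Hence $H^{(5)}_N\equiv-\frac15\bigl(B_{p-5}(s/m)-B_{p-5}\bigr)$, and $s/m\in\{2/3,5/6\}$ reduces to $1/3,1/6$.
\item For even $n$, $B_n(1/3)=\frac12(3^{1-n}-1)B_n$ and $B_n(1/6)=\frac12(1-2^{1-n})(1-3^{1-n})B_n$.
\item For $n=p-5$ we have $2^{1-n}\equiv2^5$ and $3^{1-n}\equiv3^5$.
\end{itemize}
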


\begin{proof}
\textup{(i)} Note that
\begin{equation*}
\sum_{j = 1}^{p - 1}\frac{(-1)^{j}}{j^{5}} = \sum_{j = 1}^{(p-1)/2}\left( \frac{(-1)^{j}}{j^{5}} + \frac{(-1)^{p - j}}{(p - j)^{5}} \right) \equiv 2\sum_{j = 1}^{(p-1)/2}\frac{(-1)^{j}}{j^{5}}\pmod{p}.
\end{equation*}
On the other hand, using \eqref{eq:H25},
\begin{equation*}
\sum_{j = 1}^{p - 1}\frac{(-1)^{j}}{j^{5}} = 2\sum_{\substack{1 \leq j \leq p - 1 \\ 2\mid j}}\frac{1}{j^{5}}-\sum_{j = 1}^{p - 1}\frac{1}{j^{5}} = \frac{2}{2^{5}}\sum_{k = 1}^{(p-1)/2}\frac{1}{k^{5}} - H_{p - 1}^{(5)} \equiv \frac{2}{2^{5}}H_{(p-1)/2}^{(5)}\pmod{p}.
\end{equation*}
Comparing these two expressions gives \eqref{eq:2.6}.

\medskip
\textup{(ii)} For each $1\le j\le(p-1)/2$, let $r_j$ and $\varepsilon_j$ be uniquely
determined by
\[
6j\equiv \varepsilon_j r_j\pmod p,
\qquad
1\le r_j\le\frac{p-1}{2},\qquad \varepsilon_j\in\{\pm1\}.
\]
The map $j\mapsto r_j$ is a permutation of
$\{1,\ldots,(p-1)/2\}$. Indeed, 
$r_{j_1}=r_{j_2}$ implies
$j_1\equiv\pm j_2\pmod p$; the minus sign is impossible since
$2\le j_1+j_2\le p-1$. Hence $j_1=j_2$, so the map is injective
and therefore bijective.

For $1\le r\le(p-1)/2$, define $\mu_r:=\varepsilon_j$, where $j$
is the unique index such that $r_j=r$. Thus, reindexing yields
\begin{equation}\label{eq:2.8}
\frac{1}{6^5}H_{(p-1)/2}^{(5)}= \sum_{j = 1}^{(p-1)/2}\frac{1}{(6j)^{5}}
\equiv
\sum_{r=1}^{(p-1)/2}\frac{\mu_r}{r^5}
\pmod p.
\end{equation}

For the unique $j$ satisfying $r_j=r$, we have
$6j\equiv\mu_r r\pmod p$ and $6\le6j\le3p-3$; hence
\[
6j\in\{r,p-r,p+r,2p-r,2p+r,3p-r\}.
\]
Since $p\equiv\pm1\pmod6$, exactly one element of the above set is divisible
by $6$, which determines $\mu_r$.
Examining the six residue classes modulo $6$, we obtain
\[
\mu_r=
\begin{cases}
-(-1)^r, & r\equiv-p\pmod3,\\
(-1)^r, & r\not\equiv-p\pmod 3.
\end{cases}
\]
Subtracting \eqref{eq:2.8} from \eqref{eq:2.6}, we obtain
\begin{equation*}
	\left( \frac{1}{2^{5}} - \frac{1}{6^{5}} \right)H_{(p-1)/2}^{(5)} \equiv \sum_{r = 1}^{(p-1)/2}\frac{(-1)^{r} - \mu_{r}}{r^{5}} = 2\sum_{\substack{1 \leq r \leq (p-1)/2 \\ r \equiv - p\pmod{3}}}\frac{(-1)^{r}}{r^{5}}\pmod{p}.
\end{equation*}
This proves \eqref{eq:2.7}.
\end{proof}

\medskip
\subsection{Finite-sum identities}

Define
\begin{equation}\label{eq:2.9}
P_0(x):=1,\qquad
P_{k-1}(x):=\prod_{j=1}^{k-1}\left(1-\frac{x^2}{j^2}\right),
\qquad k\geq2.
\end{equation}

\begin{lemma}\label{lem:finite-sum}
Let $n\ge1$ be an integer and
$x\in\mathbb{R}\setminus\{\pm1,\ldots,\pm n\}$. Then
\begin{equation}\label{eq:2.10}
\begin{aligned}
\frac{1}{2}\sum_{k = 1}^{n}\frac{3k^{2} + x^{2}}{k^{2}(k^{2} - x^{2})\binom{2k}{k}}P_{k - 1}(x)
+ \frac{1}{2}\sum_{k = 1}^{n}\frac{(-1)^{n + k}}{k^{2}\binom{n}{k}\binom{n + k}{k}}P_{k - 1}(x)
= \sum_{k = 1}^{n}\frac{(-1)^{k}}{x^{2} - k^{2}}.
\end{aligned}
\end{equation}
\end{lemma}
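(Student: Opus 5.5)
Induction on $n$ is the natural route: both sides are finite sums, and the difference between consecutive cases should telescope. Write $L_n(x)=A_n(x)+B_n(x)$ for the left-hand side of \eqref{eq:2.10}, where
\[
A_n(x)=\frac12\sum_{k=1}^n\frac{3k^2+x^2}{k^2(k^2-x^2)\binom{2k}{k}}P_{k-1}(x),\qquad
B_n(x)=\frac12\sum_{k=1}^n\frac{(-1)^{n+k}}{k^2\binom nk\binom{n+k}k}P_{k-1}(x).
\]
The right-hand side is $R_n(x)=\sum_{k=1}^n(-1)^k/(x^2-k^2)$.

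\textbf{Base case.} For $n=1$, $P_0=1$, so
\[
L_1(x)=\frac{3+x^2}{4(1-x^2)}+\frac14=\frac{1}{1-x^2}=R_1(x).
\]

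\textbf{Inductive step.} The claim $L_n-L_{n-1}=R_n-R_{n-1}$ amounts to
\[
\frac{(3n^2+x^2)P_{n-1}(x)}{2n^2(n^2-x^2)\binom{2n}{n}}+B_n(x)-B_{n-1}(x)=\frac{(-1)^n}{x^2-n^2}.
\]
I will not attack $B_n-B_{n-1}$ term by term. Instead I will use a telescoping (Gosper/WZ-type) device in $k$. I will look for a closed form $G_n(k)$, with $G_n(0)=0$, such that
\[
\frac{(-1)^{n+k}P_{k-1}}{k^2\binom nk\binom{n+k}k}+\frac{(-1)^{n+k}P_{k-1}}{k^2\binom{n-1}k\binom{n-1+k}k}=G_n(k)-G_n(k-1)
\]
for $1\le k\le n-1$. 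The second term carries a plus sign because $(-1)^{n-1+k}=-(-1)^{n+k}$.

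To find $G_n$, note that
\[
\frac{1}{\binom nk\binom{n+k}k}=\frac{(k!)^2\,(n-k)!\,(n-1)!}{n!\,(n+k)!}\cdot n,
\]
so the ratio of consecutive summands is a rational function of $k$. Together with $P_k/P_{k-1}=1-x^2/k^2$, this makes Gosper's algorithm applicable. Natural candidates for $G_n(k)$ are multiples of
\[
\frac{(-1)^{n+k}P_k(x)\,(k!)^2}{(n-k-1)!\,(n+k)!}
\]
times a rational factor in $n,k,x$. I would fix that factor by matching the recurrence. The boundary terms $k=n$ from $B_n$, plus $G_n(n-1)$, should then combine with the new $A$-term. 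For the $A$-term I will use
\[
\binom{2n}{n}=\frac{(2n)!}{(n!)^2}
\quad\text{and}\quad
P_{n-1}(x)\,\frac{n^2-x^2}{n^2}=P_n(x).
\]
The result should produce $(-1)^n/(x^2-n^2)$.

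\textbf{Alternative if the certificate resists.} I would prove the identity by partial fractions in $x$. Both sides are even rational functions of $x$ with at most simple poles at $x=\pm m$, $1\le m\le n$. Each side tends to a finite limit as $x\to\infty$ after multiplying by suitable powers of $x$, but a polynomial growth check is needed. So it suffices to compare residues at $x=m$ and the behaviour at infinity.

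On the left, a pole at $x=m$ arises only from the factor $1/(k^2-x^2)$ with $k=m$, since $P_{k-1}(x)$ is a polynomial. Its residue is
\[
-\frac{(3m^2+m^2)P_{m-1}(m)}{2m^2\cdot 2m\,\binom{2m}{m}}.
\]
Here
\[
P_{m-1}(m)=\prod_{j<m}\Bigl(1-\frac{m^2}{j^2}\Bigr)=(-1)^{m-1}\frac{(2m-1)!}{m\,(m-1)!^2}\cdot\frac{1}{m}\cdots,
\]
which is essentially $(-1)^{m-1}\binom{2m}{m}/2$. The residue should therefore be $(-1)^m/(2m)$, matching the right-hand side.

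The difficulty is that $P_{k-1}(x)$ has degree $2k-2$, so the left side grows polynomially in $x$. One must then show that the polynomial part vanishes, which is a nontrivial identity involving the $B_n$ sum. I therefore expect the main obstacle in either approach to be controlling the $B_n$ sum, either by finding the WZ certificate or by verifying the cancellation of the polynomial part. I would try the Gosper certificate first.
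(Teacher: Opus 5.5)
Your skeleton is the paper's: induction on $n$ (your base case $L_1=R_1=1/(1-x^2)$ is correct), reduction to the one-step identity, and a telescoping in $k$ of $B_{n,k}-B_{n-1,k}$, where $B_{n,k}:=\frac{(-1)^{n+k}P_{k-1}(x)}{k^2\binom nk\binom{n+k}k}$ is the paper's notation. But the proposal stops exactly where the proof's content lies. You never produce or check the certificate, and the shape you suggest cannot work. Since $\frac{1}{k^2\binom nk\binom{n+k}k}=\frac{((k-1)!)^2(n-k)!}{(n+k)!}$, the factorial in $n-k$ sits in the \emph{numerator}. Your candidate $\frac{(-1)^{n+k}P_k(x)(k!)^2}{(n-k-1)!(n+k)!}$ differs from the correct shape by $1/((n-k-1)!)^2$, which is not rational in $k$, so no rational multiplier can repair it. What is missing is the pair of ratio identities
\[
\frac{B_{n,k}}{B_{n-1,k}}=-\frac{n-k}{n+k},\qquad
\frac{B_{n,k+1}}{B_{n,k}}=\frac{x^2-k^2}{(n-k)(n+k+1)}.
\]
These give $B_{n,k}-B_{n-1,k}=\frac{2n}{n-k}B_{n,k}=G_{n,k+1}-G_{n,k}$ with $G_{n,k}:=\frac{2n(n+k)}{x^2-n^2}B_{n,k}$. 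In your indexing this is $G_n(k)=G_{n,k+1}=\frac{2n(-1)^{n+k+1}(k!)^2(n-k-1)!}{(x^2-n^2)(n+k)!}P_k(x)$.

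Your boundary bookkeeping is also off. This certificate has $G_n(0)=G_{n,1}=\frac{2(-1)^{n+1}}{x^2-n^2}\neq0$, and that lower boundary value is exactly where the right-hand side comes from. The new $A$-term, $B_{n,n}=\frac{P_{n-1}(x)}{n^2\binom{2n}n}$ and the upper boundary $G_{n,n}=-\frac{4P_{n-1}(x)}{(n^2-x^2)\binom{2n}n}$ cancel identically, because
\[
\frac{3n^2+x^2}{n^2(n^2-x^2)}+\frac1{n^2}-\frac4{n^2-x^2}=0.
\]
This leaves $L_n-L_{n-1}=-\frac12G_{n,1}=\frac{(-1)^n}{x^2-n^2}$. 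Subtracting the constant $G_{n,1}$ to force $G_n(0)=0$ is legitimate, but then $G_n$ is no longer of the hypergeometric form you describe, and the subtracted constant carries the whole answer.

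Your fallback does not close the gap either. The residue match is correct: at $x=m$ both sides have residue $(-1)^m/(2m)$, using $P_{m-1}(m)=(-1)^{m-1}\binom{2m}m/2$, and evenness handles $x=-m$. But this only shows that the difference is an even polynomial of degree at most $2n-2$. Proving that polynomial vanishes is essentially the content of the lemma itself, which you left open.
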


\begin{proof}
Let $L_n(x)$ and $R_n(x)$ denote the left- and right-hand sides of \eqref{eq:2.10}.
For $n=1$,
\[
L_1(x)=R_1(x)=\frac{1}{1-x^2},
\]
For $n\ge2$,
\[
R_n(x)-R_{n-1}(x)
=\frac{(-1)^n}{x^2-n^2}.
\]
Thus it suffices to prove
\begin{equation}\label{eq:2.11}
L_{n}(x) - L_{n - 1}(x) = \frac{(-1)^{n}}{x^{2} - n^{2}} \qquad n\ge2.
\end{equation}

We prove \eqref{eq:2.11} by a telescoping argument. For
\(1\le k\le n\), set
\begin{equation}\label{eq:2.12}
\begin{alignedat}{2}
B_{n,k}(x)&:=\frac{(-1)^{n+k}}{k^{2}\binom{n}{k}\binom{n+k}{k}}P_{k-1}(x),
&\qquad
G_{n,k}(x)&:=\frac{2n(n+k)}{x^{2}-n^{2}}B_{n,k}(x).
\end{alignedat}
\end{equation}
For $1\le k\le n-1$, we have
\[
\frac{B_{n,k}(x)}{B_{n-1,k}(x)}
=-\frac{n-k}{n+k},
\qquad
\frac{B_{n,k+1}(x)}{B_{n,k}(x)}
=
\frac{x^2-k^2}{(n-k)(n+k+1)}.
\]
Hence
\begin{equation*}\label{eq:2.13}
B_{n,k}(x)-B_{n-1,k}(x)
=\frac{2n}{n-k}B_{n,k}(x)
=G_{n,k+1}(x)-G_{n,k}(x).
\end{equation*}
Summing this identity over $k=1,\ldots,n-1$, we obtain
\begin{equation*}
\sum_{k = 1}^{n - 1}\left( B_{n,k}(x) - B_{n - 1,k}(x) \right) = G_{n,n}(x) - G_{n,1}(x).
\end{equation*}
Consequently,
\begin{equation}\label{eq:2.14}
\sum_{k=1}^{n}B_{n,k}(x)-\sum_{k=1}^{n-1}B_{n-1,k}(x)
=B_{n,n}(x)+G_{n,n}(x)-G_{n,1}(x).
\end{equation}
By the definitions in \eqref{eq:2.12},
\begin{equation}\label{eq:2.15}
B_{n,n}(x)=\frac{P_{n-1}(x)}{n^{2}\binom{2n}{n}},\qquad
G_{n,n}(x)=-\frac{4P_{n-1}(x)}{(n^{2}-x^{2})\binom{2n}{n}},\qquad
G_{n,1}(x)= \frac{2(-1)^{n + 1}}{x^{2} - n^{2}}.
\end{equation}
Using \eqref{eq:2.14} and \eqref{eq:2.15}, we obtain
\begin{equation*}
\begin{aligned}
L_{n}(x) - L_{n - 1}(x)
&= \frac{1}{2}\frac{3n^{2} + x^{2}}{n^{2}\left( n^{2} - x^{2} \right)\binom{2n}{n}}P_{n - 1}(x)
 + \frac{1}{2}\left( \sum_{k = 1}^{n}{B_{n,k}(x)} - \sum_{k = 1}^{n - 1}{B_{n - 1,k}(x)} \right)\\
&= \frac{1}{2}\frac{3n^{2} + x^{2}}{n^{2}\left( n^{2} - x^{2} \right)\binom{2n}{n}}P_{n - 1}(x)
 + \frac{1}{2}\left( G_{n,n}(x) - G_{n,1}(x) + B_{n,n}(x) \right)\\
&= \frac{1}{2}\frac{P_{n - 1}(x)}{\binom{2n}{n}}\left[ \frac{3n^{2} + x^{2}}{n^{2}\left( n^{2} - x^{2} \right)} + \frac{1}{n^{2}} - \frac{4}{n^{2} - x^{2}} \right]
 - \frac{1}{2}G_{n,1}(x)\\
&= \frac{(-1)^{n}}{x^{2} - n^{2}}.
\end{aligned}
\end{equation*}
Thus $L_n(x)$ and $R_n(x)$ have the same initial value and satisfy
the same recurrence, so $L_n(x)=R_n(x)$ for all $n\ge1$.
This proves \eqref{eq:2.10}.
\end{proof}

\begin{lemma}\label{lem:En}
Let \(n>1\) be an integer with \(n\equiv\pm1\pmod{6}\). Then
\begin{equation}\label{eq:2.17}
E_{n}:= \sum_{k = 1}^{n - 1}\frac{1}{n^{2} - k^{2}} = n\sum_{k = 1}^{n - 1}(-1)^{k - 1}\frac{\binom{n + k - 1}{2k - 1}}{k\left( k^{2} - n^{2} \right)}.
\end{equation}
\end{lemma}

\begin{proof}
Applying Lemma~\ref{lem:finite-sum} with $n$ replaced by $n-1$ and $x=n$, we set
\begin{equation*}
A:= \frac{1}{2}\sum_{k = 1}^{n - 1}\frac{3k^{2} + n^{2}}{k^{2}\left( k^{2} - n^{2} \right)}\frac{P_{k - 1}(n)}{\binom{2k}{k}},\ \ 
B:= \frac{1}{2}\sum_{k = 1}^{n - 1}\frac{(-1)^{n - 1 + k}}{k^{2}\binom{n - 1}{k}\binom{n - 1 + k}{k}}P_{k - 1}(n),\ \ 
C:= \sum_{k = 1}^{n - 1}\frac{(-1)^{k}}{n^{2} - k^{2}}.
\end{equation*}
By \eqref{eq:2.10}, we have
\begin{equation}\label{eq:2.19}
A + B = C.
\end{equation}
Note that
\begin{equation*}
P_{k - 1}(n) = \prod_{j = 1}^{k - 1}\left( 1 - \frac{n^{2}}{j^{2}} \right) = \prod_{j = 1}^{k - 1}\frac{j^{2} - n^{2}}{j^{2}} = (-1)^{k - 1}\frac{\prod_{j = 1}^{k - 1}\left( n^{2} - j^{2} \right)}{\left( (k - 1)! \right)^{2}},
\end{equation*}
\begin{equation*}
k^{2}\binom{n - 1}{k}\binom{n - 1 + k}{k} = \frac{k^{2}}{(k!)^{2}}\left( \prod_{j = 1}^{k}(n - j) \right)\left( \prod_{j = 0}^{k - 1}(n + j) \right) = n(n - k)\frac{\prod_{j = 1}^{k - 1}\left( n^{2} - j^{2} \right)}{\left( (k - 1)! \right)^{2}}.
\end{equation*}
Taking the ratio of these two identities gives
\begin{equation}\label{eq:2.20}
\frac{P_{k - 1}(n)}{k^{2}\binom{n - 1}{k}\binom{n - 1 + k}{k}} = \frac{(-1)^{k - 1}}{n(n - k)}.
\end{equation}
Since $n\equiv\pm1\pmod6$, $n$ is odd, and \eqref{eq:2.20} gives
\begin{equation*}
B = \frac{1}{2}\sum_{k = 1}^{n - 1}\frac{(-1)^{n - 1 + k}}{k^{2}\binom{n - 1}{k}\binom{n - 1 + k}{k}}P_{k - 1}(n) = - \frac{1}{2n}\sum_{k = 1}^{n - 1}\frac{1}{n - k} = - \frac{H_{n - 1}}{2n},
\end{equation*}
On the other hand,
\begin{equation*}
E_{n} - C
= \sum_{k = 1}^{n - 1}\frac{1 - (-1)^{k}}{n^{2} - k^{2}}
= \frac{1}{n}\sum_{k \in \{ 1,3,\ldots,n - 2\}}\left( \frac{1}{n - k} + \frac{1}{n + k} \right)\\
= \frac{1}{2n}\sum_{j = 1}^{n - 1}\frac{1}{j}
 = - B.
\end{equation*}
Combining this with \eqref{eq:2.19}, we obtain
\begin{equation}\label{eq:2.21}
E_{n} = C - B = A.
\end{equation}

Define
\begin{equation*}\label{eq:2.22}
a_{k}:= (-1)^{k - 1}\binom{n - 1 + k}{2k - 1}.
\end{equation*}
Using the combinatorial identity $\binom{n}{m}\binom{m}{k} 
=\binom{n}{k}\binom{n-k}{m-k}$ and \eqref{eq:2.20},
we obtain
\begin{equation*}
\frac{P_{k - 1}(n)}{k^{2}\binom{2k}{k}\binom{n - 1 + k}{2k}} = \frac{P_{k - 1}(n)}{k^{2}\binom{n - 1}{k}\binom{n - 1 + k}{k}} = \frac{(-1)^{k - 1}}{n(n - k)}.
\end{equation*}
Hence
\begin{equation*}\label{eq:2.23}
\frac{P_{k - 1}(n)}{k^{2}\binom{2k}{k}} = \frac{(-1)^{k - 1}}{n(n - k)}\binom{n - 1 + k}{2k} = \frac{(-1)^{k - 1}}{2nk}\binom{n - 1 + k}{2k - 1} = \frac{a_{k}}{2nk}.
\end{equation*}
Let \(R\) denote the right-hand side of \eqref{eq:2.17}. 
\begin{equation*}
\begin{aligned}
A - R
= \sum_{k = 1}^{n - 1}\frac{a_{k}}{k\left( k^{2} - n^{2} \right)}\left( \frac{3k^{2} + n^{2}}{4n} - n \right)
= \frac{3}{4n}\sum_{k = 1}^{n - 1}\frac{a_{k}}{k}.
\end{aligned}
\end{equation*}
The identity in \cite[Section.~4.2]{prudnikov1986} gives
\begin{equation*}
\sum_{k = 1}^{n}\frac{a_{k}}{k} = \sum_{k = 1}^{n}\frac{(-1)^{k - 1}}{k}\binom{n - 1 + k}{2k - 1} = \frac{2}{n}\left( 1 - \cos\frac{n\pi}{3} \right).
\end{equation*}
Since \(n\equiv\pm1\pmod{6}\), we have
$a_n=1$ and $\cos(n\pi/3)=1/2$. Hence
\begin{equation*}
A - R = \frac{3}{4n}\left( \sum_{k = 1}^{n}\frac{a_{k}}{k} - \frac{a_{n}}{n} \right) = \frac{3}{4n}\left[ \frac{2}{n}\left( 1 - \cos\frac{n\pi}{3} \right) - \frac{1}{n} \right] = 0.
\end{equation*}
Together with \eqref{eq:2.21}, this gives
\begin{equation*}
E_{n} = A = R.
\end{equation*}
This proves the lemma.
\end{proof}

\begin{lemma}\label{lem:Fn}
For every integer \(n\ge 1\), define
\begin{equation}\label{eq:2.24}
F_{n}(x) := \sum_{k = 0}^{\left\lfloor (n-1)/2 \right\rfloor}(-1)^{k}\binom{n - 1 - k}{k}x^{k},
\end{equation}
where $\lfloor\cdot\rfloor$ denotes the floor function. 
If $t\neq1/2$, then
\begin{equation}\label{eq:2.25}
F_{n}\left( t(1 - t) \right) = \frac{(1 - t)^{n} - t^{n}}{1 - 2t}.
\end{equation}
\end{lemma}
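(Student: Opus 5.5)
We argue by strong induction on $n$, using the three-term recurrence satisfied by these polynomials. Set $x=t(1-t)$ and $G_n(t):=\bigl((1-t)^n-t^n\bigr)/(1-2t)$. We will show that $F_n(x)$ and $G_n(t)$ agree for $n=1,2$ and satisfy the same recurrence
\[
u_{n+1}=u_n-x\,u_{n-1},\qquad n\ge 2 .
\]

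For the initial values, note that $\lfloor 0/2\rfloor=\lfloor 1/2\rfloor=0$. Hence $F_1(x)=F_2(x)=1$. On the other side,
\[
G_1(t)=\frac{1-2t}{1-2t}=1,\qquad G_2(t)=\frac{(1-t)^2-t^2}{1-2t}=1 .
\]

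For the recurrence of $F_n$, we compare coefficients of $(-1)^k x^k$. In $F_n(x)-xF_{n-1}(x)$ this coefficient is $\binom{n-1-k}{k}+\binom{n-1-k}{k-1}$, where the second term comes from the index shift $k-1\mapsto k$ in $xF_{n-1}$. By Pascal's rule this equals $\binom{n-k}{k}$, which is the coefficient in $F_{n+1}(x)$. The summation ranges need some care, so we use the convention that $\binom{m}{k}=0$ when $k>m$ or $k<0$. With this convention, the terms beyond $\lfloor (n-1)/2\rfloor$, $\lfloor (n-2)/2\rfloor$ and $\lfloor n/2\rfloor$ vanish automatically. This boundary bookkeeping is the only step needing attention, and it is routine.

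For the recurrence of $G_n$, write $a=1-t$ and $b=t$. Then $a+b=1$ and $ab=x$, so $a,b$ are the roots of $z^2-z+x=0$. Therefore both sequences $a^n$ and $b^n$ satisfy $u_{n+1}=u_n-x\,u_{n-1}$. The same holds for their difference $a^n-b^n$, and also after dividing by the constant $a-b=1-2t\neq 0$; this last step is where the hypothesis $t\neq 1/2$ is used.

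Since both sides satisfy the same second-order recurrence and agree at $n=1,2$, induction gives $F_n(t(1-t))=G_n(t)$ for all $n\ge 1$. This is \eqref{eq:2.25}.
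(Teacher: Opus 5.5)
Your proof is correct and follows essentially the same route as the paper: the paper also derives $F_n=F_{n-1}-xF_{n-2}$ from Pascal's rule and uses $F_1=F_2=1$. It then concludes from the factorization $\lambda^2-\lambda+t(1-t)=(\lambda-t)(\lambda-(1-t))$, whose roots are distinct because $t\neq 1/2$. You add two details the paper leaves implicit: the boundary bookkeeping in the coefficient comparison, and the explicit check that $G_n$ satisfies the recurrence and the initial conditions.
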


\begin{proof}
Pascal's identity yields the recurrence
\[
F_n(x)=F_{n-1}(x)-xF_{n-2}(x),
\qquad n\ge3,
\]
with initial values $F_1(x)=F_2(x)=1$. After setting $x=t(1-t)$, the characteristic
polynomial factors as
\[
\lambda^2-\lambda+t(1-t)
=(\lambda-t)(\lambda-(1-t)).
\]
Since $t\ne1/2$, the initial conditions give
\[
F_n(t(1-t))=\frac{(1-t)^n-t^n}{1-2t}.
\]
\end{proof}

We recall the Chebyshev polynomials of the second kind, defined by
\begin{equation}\label{eq:2.29}
	U_n(\cos\theta)
	=
	\frac{\sin((n+1)\theta)}{\sin\theta},
    \qquad n\ge0.
\end{equation}
They admit the following explicit expansion:
\begin{equation}\label{eq:2.28}
	U_n(x)
	=
	\sum_{k=0}^{\lfloor n/2\rfloor}
	(-1)^k
	\binom{n-k}{k}
	(2x)^{\,n-2k}.
\end{equation}

Let
\begin{equation*}
\xi := e^{\pi i/3},\qquad \overline{\xi} := e^{- \pi i/3},\qquad \omega := e^{2\pi i/3}.
\end{equation*}

\begin{lemma}\label{lem:roots}
For a nonnegative integer $m$, we have
\begin{equation}\label{eq:2.30}
A_{m} := \sum_{k = 1}^{m}\frac{(-1)^{k}}{k}\binom{2m - k}{k} = \sum_{k = 1}^{2m}\frac{\xi^k+\overline{\xi}^{\,k}}{k} - H_{2m},
\end{equation}
\begin{equation}\label{eq:2.31}
B_{m} := \sum_{k = 0}^{m}\frac{(-1)^{k}}{2m + 1 - k}\binom{2m - k}{k} = \sum_{k = 1}^{2m + 1}\frac{\xi^k+\overline{\xi}^{\,k}}{k},
\end{equation}
\begin{equation}\label{eq:2.32}
C_{m}:= \sum_{k = 0}^{m}{\frac{(-1)^{k}}{2m + 1 - 2k}\binom{2m - k}{k}} 
=\frac{2\cos\left( \frac{(2m + 1)\pi}{3} \right)}{2m + 1}
= \frac{\xi^{2m+1}+\overline{\xi}^{\,2m+1}}{2m + 1}.
\end{equation}
Combining \eqref{eq:2.30}--\eqref{eq:2.32}, we obtain
\begin{equation}\label{eq:2.33}
A_{m} - B_{m} + C_{m} = - H_{2m}.
\end{equation}
\end{lemma}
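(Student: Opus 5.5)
We use Lemma~\ref{lem:Fn} with $x=1$. Take $t=\xi$: since $\xi+\overline{\xi}=1$ and $\xi\overline{\xi}=1$, we have $\xi(1-\xi)=\xi\overline{\xi}=1$, and $1-2\xi=-i\sqrt3\neq0$. Hence
\[
F_n(1)=\sum_{k\ge0}(-1)^k\binom{n-1-k}{k}=\frac{\overline{\xi}^{\,n}-\xi^n}{1-2\xi}.
\]
In the same way $U_n(1/2)=\sin((n+1)\pi/3)/\sin(\pi/3)$ by \eqref{eq:2.29}. The plan is to rewrite each of $A_m,B_m,C_m$ as an integral of a polynomial built from such sums, and then evaluate.

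\textbf{Identity \eqref{eq:2.32}.} Use \eqref{eq:2.28} with $n=2m$:
\[
U_{2m}(x)=\sum_k(-1)^k\binom{2m-k}{k}(2x)^{2m-2k}.
\]
Integrating gives
\[
\int_0^{1/2}U_{2m}(x)\dd x=\tfrac12 C_m ,
\]
because $\int_0^{1/2}(2x)^{2m-2k}\dd x=\frac{1}{2(2m+1-2k)}$. Substitute $x=\cos\theta$. Then $U_{2m}(\cos\theta)\sin\theta=\sin((2m+1)\theta)$, and the integral becomes
\[
\int_{\pi/3}^{\pi/2}\sin((2m+1)\theta)\dd\theta=\frac{\cos((2m+1)\pi/3)-\cos((2m+1)\pi/2)}{2m+1}.
\]
The last cosine vanishes, which gives the stated closed form; the exponential form follows at once.

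\textbf{Identities \eqref{eq:2.30} and \eqref{eq:2.31}.} For $B_m$, write
\[
\frac{1}{2m+1-k}=\int_0^1 y^{2m-k}\dd y ,
\]
so that
\[
B_m=\int_0^1\sum_k(-1)^k\binom{2m-k}{k}y^{2m-2k}\,y^{k}\dd y=\int_0^1 y^{2m}F_{2m+1}(1/y)\dd y .
\]
The homogenized polynomial $\sum_k(-1)^k\binom{n-1-k}{k}a^{n-1-2k}b^k$, with $a=y$ and $b=1$, factors by Lemma~\ref{lem:Fn}: choose $s,u$ with $s+u=y$ and $su=1$. Then it equals $(u^{2m+1}-s^{2m+1})/(u-s)$, and with $s=y\overline{\xi}$-type roots it becomes
\[
\sum_{j=0}^{2m}(y\xi)^j\,\overline{\xi}^{\,\cdot}\cdots .
\]
It is cleaner to work generatingly: $\sum_k(-1)^k\binom{N-k}{k}z^k$ is the coefficient of $w^N$ in $1/(1-w+zw^2)$. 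This leads to the form below, and integrating termwise gives $\sum_{k\le 2m+1}(\xi^k+\overline{\xi}^{\,k})/k$.

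We then do the same for $A_m$, using
\[
\frac{1}{k}\binom{2m-k}{k}=\frac{1}{k}\binom{2m-k}{k}
\]
together with the standard fact
\[
\sum_{k\ge1}\frac{(-1)^k}{k}\binom{N-k}{k-1}\cdot\frac{N}{N-k}\quad\text{(Lucas-type: } \textstyle\sum_{k}\frac{N}{N-k}\binom{N-k}{k}(-z)^k=\alpha^N+\beta^N\text{)},
\]
where $\alpha+\beta=1$ and $\alpha\beta=z$. Concretely, differentiate $\log(1-w+zw^2)=\log(1-\alpha w)+\log(1-\beta w)$ in $z$ and compare coefficients of $w^N$. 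This gives
\[
\sum_k\frac{(-1)^k}{k}\binom{N-k}{k}\cdot[\ldots]=\ldots ,
\]
and at $z=1$ (so $\{\alpha,\beta\}=\{\xi,\overline{\xi}\}$) one obtains
\[
\sum_{k=1}^{N}\frac{\alpha^k+\beta^k-1}{k}
\]
as the desired expression. Applying this with $N=2m$ gives \eqref{eq:2.30}.

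\textbf{Combination.} Once the three closed forms are established, \eqref{eq:2.33} is a one-line check. The sum for $B_m$ minus the sum for $A_m$ leaves the single term $(\xi^{2m+1}+\overline{\xi}^{\,2m+1})/(2m+1)$ plus $H_{2m}$, and that single term equals $C_m$. Hence $A_m-B_m+C_m=-H_{2m}$.

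The main obstacle is the bookkeeping in \eqref{eq:2.30}. One has to derive the exact coefficient identity $\frac1k\binom{N-k}{k}$ from the logarithmic generating function and check that the boundary terms produce exactly $-H_{2m}$.
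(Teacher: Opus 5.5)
\textbf{There is a genuine gap: the two substantive identities \eqref{eq:2.30} and \eqref{eq:2.31} are asserted, not derived.}

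Your computation of $C_m$ (substituting $x=\cos\theta$ in $\int_0^{1/2}U_{2m}(x)\dd x$) is correct and is the paper's argument. The final deduction of \eqref{eq:2.33} from the three closed forms is also fine. The problems are in $B_m$ and $A_m$.

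For $B_m$, the homogenization you choose does not match your own integrand. Taking $a=y$, $b=1$ (i.e.\ $s+u=y$, $su=1$) produces $\sum_k(-1)^k\binom{2m-k}{k}y^{2m-2k}=y^{2m}F_{2m+1}(y^{-2})$. But $y^{2m}F_{2m+1}(1/y)$ requires $s+u=su=y$. The display then breaks off, and ``the form below'' obtained ``generatingly'' never appears.

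For $A_m$, the displayed identity $\frac1k\binom{2m-k}{k}=\frac1k\binom{2m-k}{k}$ is a tautology. The ``standard fact'' is an expression with no equality sign, and the decisive relation is literally ``$[\ldots]=\ldots$''. Moreover, what is needed is not a specialization at $z=1$ but the integral $A_m=\int_0^1(F_{2m+1}(z)-1)z^{-1}\dd z$ over $[0,1]$. The $-1$ in $(\alpha^k+\beta^k-1)/k$, and hence the $-H_{2m}$, comes from the endpoint $z=0$, where $\{\alpha,\beta\}=\{0,1\}$. This is exactly the bookkeeping you call the main obstacle, and it is left undone.

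Your generating function does close both gaps, by a route genuinely different from the paper's path deformations. The paper uses $x=t(1-t)$ with $t$ running from $0$ to $\xi$ for $A_m$, and $x=(1+u)^2/u$ with $u$ on the arc from $-1$ to $\omega$ for $B_m$. In your approach, use $\sum_{N\ge0}w^N\sum_k(-1)^k\binom{N-k}{k}z^k=(1-w+zw^2)^{-1}$ and $1-w+w^2=(1-\xi w)(1-\overline{\xi}w)$. Integrating coefficientwise over $[0,1]$ gives
\[
\sum_{N\ge0}w^N\int_0^1\frac{F_{N+1}(z)-1}{z}\dd z
=-\frac{1}{1-w}\log\frac{1-w+w^2}{1-w}
=\frac{1}{1-w}\sum_{k\ge1}\frac{\xi^k+\overline{\xi}^{\,k}-1}{k}\,w^k
\]
and
\[
\sum_{N\ge0}w^N\int_0^1y^NF_{N+1}(1/y)\dd y
=\int_0^1\frac{\dd y}{1-yw(1-w)}
=\frac{1}{1-w}\sum_{k\ge1}\frac{\xi^k+\overline{\xi}^{\,k}}{k}\,w^{k-1}.
\]
Reading off the coefficient of $w^{2m}$ gives \eqref{eq:2.30} and \eqref{eq:2.31}.

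This purely formal argument needs no choice of contour and no holomorphy check at $u=-1$. The paper's substitutions, by contrast, reuse the closed form of Lemma~\ref{lem:Fn} directly. As written, however, your proposal establishes only \eqref{eq:2.32} and the final combination.
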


\begin{proof}
The case $m=0$ is immediate. Hence assume $m\ge1$ and set
\begin{equation*}
c_{m,k}:= (-1)^{k}\binom{2m - k}{k}, \qquad 0\le k\le m.
\end{equation*}

\textup{(i)} Applying \eqref{eq:2.24} with $n=2m+1$, we obtain
\begin{equation*}
\frac{F_{2m + 1}(x) - 1}{x} = \sum_{k = 1}^{m}(-1)^{k}\binom{2m - k}{k}x^{k - 1} = \sum_{k = 1}^{m}{c_{m,k}x^{k - 1}}.
\end{equation*}
Hence
\begin{equation}\label{eq:2.34}
A_{m} = \sum_{k = 1}^{m}{\frac{c_{m,k}}{k}} = \int_{0}^{1}{\sum_{k = 1}^{m}c_{m,k}x^{k - 1}\dd x} = \int_{0}^{1}{\frac{F_{2m + 1}(x) - 1}{x}\dd x}.
\end{equation}
Since $(F_{2m+1}(x)-1)/x$ is a polynomial, the integrand is
entire. Hence the integral depends only on its endpoints.
To apply \eqref{eq:2.25}, we use the parametrization
\[
x=t(1-t),\qquad t:0\to\xi=e^{\pi i/3},
\]
where $t$ traverses the line segment from $0$ to $\xi$. Therefore,
\begin{equation*}
x(0)=0,\qquad x(\xi)=\xi(1-\xi)=1.
\end{equation*}
Substituting $x = t(1 - t)$ into \eqref{eq:2.34} and using \eqref{eq:2.25}, we obtain
\begin{equation*}
\begin{aligned}
A_m&=\int_0^{\xi}\frac{(1-t)^{2m+1}-t^{2m+1}-(1-2t)}{t(1-t)}\,\dd t 
=\int_0^{\xi}\left(\frac{1-t^{2m}}{1-t}-\frac{1-(1-t)^{2m}}{t}\right)\dd t \\
&=\int_0^{\xi}\left(\sum_{k=0}^{2m-1}t^k-\sum_{k=0}^{2m-1}(1-t)^k\right)\dd t
=\sum_{k=1}^{2m}\frac{\xi^k+\overline{\xi}^{\,k}}{k}-H_{2m}.
\end{aligned}
\end{equation*}
This proves \eqref{eq:2.30}.

\medskip
\textup{(ii)} For $B_m$, termwise integration gives
\begin{equation}\label{eq:2.35}
B_{m} = \sum_{k = 0}^{m}\frac{c_{m,k}}{2m + 1 - k} = \int_{0}^{1}{\sum_{k = 0}^{m}c_{m,k}x^{2m - k}\,\dd x} = \int_{0}^{1}{x^{2m}F_{2m + 1}\left( \frac{1}{x} \right)\,\dd x}.
\end{equation}
Since the integrand is again a polynomial, the path may be deformed, and we use
\begin{equation*}
	x = \frac{(1 + u)^{2}}{u},\quad\quad u:\  - 1 \rightarrow \omega = e^{2\pi i/3}.
\end{equation*}
where  \(u\) traverses the shorter arc of the unit circle from \(-1\) to \(\omega\). Then 
\begin{equation*}
	x(-1)=0,\qquad x(\omega)=(1+\omega)^2/\omega=1.
\end{equation*}
For $u\ne-1$ on this arc, put $t=1/(1+u)$ in \eqref{eq:2.25}. 
Then
 \begin{equation}\label{eq:2.36}
 	\begin{aligned}
 		x^{2m}F_{2m+1}\left(\frac1x\right)
 		=\frac{(1+u)^{4m}}{u^{2m}}F_{2m+1}\left(\frac{1}{1+u}\frac{u}{1+u}\right)
 	 		=\frac{(1+u)^{2m}}{u^{2m}(u-1)}\left(u^{2m+1}-1\right).
 	\end{aligned}
 \end{equation}
 Since $m\ge1$, the right-hand side of \eqref{eq:2.36} extends
 holomorphically to $u=-1$. Substituting $x = (1 + u)^{2}/u$ into \eqref{eq:2.35} 
 and using \eqref{eq:2.36}, we obtain
\begin{equation*}
B_{m} = \int_{0}^{1}{x^{2m}F_{2m + 1}\left( \frac{1}{x} \right)\dd x} = \int_{- 1}^{\omega}\frac{\left( u^{2m + 1} - 1 \right)(1 + u)^{2m + 1}}{u^{2m + 2}}\, \dd u.
\end{equation*}
Next, observe that
\begin{equation*}
\begin{aligned}
&\frac{\dd}{\dd u}\left(\sum_{k=1}^{2m+1}\binom{2m+1}{k}\frac{u^k+u^{-k}}{k}\right)
=\sum_{k=1}^{2m+1}\binom{2m+1}{k}\left(u^{k-1}-u^{-k-1}\right)\\
=&\frac1u\left((1+u)^{2m+1}-\left(1+\frac1u\right)^{2m+1}\right)
=\frac{(u^{2m+1}-1)(1+u)^{2m+1}}{u^{2m+2}}.
\end{aligned}
\end{equation*}
Hence
\begin{equation*}
B_{m} = \left[ \sum_{k = 1}^{2m + 1}\binom{2m + 1}{k}\frac{u^{k} + u^{- k}}{k} \right]_{- 1}^{\omega}
= \sum_{k = 1}^{2m + 1}\binom{2m + 1}{k}\frac{\omega^{k} + \omega^{- k} - 2(-1)^{k}}{k}.
\end{equation*}
We use the elementary identity
\begin{equation*}
\sum_{k=1}^{n}\binom{n}{k}\frac{z^k}{k}
=
\sum_{k=1}^{n}\frac{(1+z)^k-1}{k}.
\end{equation*}
which follows by differentiating both sides and comparing the values at $z=0$.
Applying the identity with $n=2m+1$ and
$z=\omega,\omega^{-1},-1$, respectively, gives
\begin{equation*}
\begin{aligned}
B_m
&=\sum_{k=1}^{2m+1}\binom{2m+1}{k}\frac{\omega^k}{k}
 +\sum_{k=1}^{2m+1}\binom{2m+1}{k}\frac{\omega^{-k}}{k}
 -2\sum_{k=1}^{2m+1}\binom{2m+1}{k}\frac{(-1)^k}{k}\\
&=\sum_{k=1}^{2m+1}\frac{(1+\omega)^k-1}{k}
 +\sum_{k=1}^{2m+1}\frac{(1+\omega^{-1})^k-1}{k}
 -2\sum_{k=1}^{2m+1}\frac{(1-1)^k-1}{k}\\
&=\sum_{k=1}^{2m+1}\frac{\xi^k+\overline{\xi}^{\,k}}{k}.
\end{aligned}
\end{equation*}
Here we used $1+\omega=\xi$ and
$1+\omega^{-1}=\overline{\xi}$. This proves \eqref{eq:2.31}.

\medskip
\textup{(iii)} Using \eqref{eq:2.28}, we obtain
\begin{equation*}
C_{m} = \sum_{k = 0}^{m}\frac{c_{m,k}}{2m + 1 - 2k} = \int_{0}^{1}{\sum_{k = 0}^{m}{c_{m,k}x^{2m - 2k}}}\, \dd x = \int_{0}^{1}{U_{2m}\left( \frac{x}{2} \right)}\, \dd x.
\end{equation*}
With the substitution \(x=2\cos\theta\), \eqref{eq:2.29} gives
\begin{equation*}
\begin{aligned}
C_m
&=-\int_{\pi/2}^{\pi/3}U_{2m}(\cos\theta)(2\sin\theta)\dd\theta
=2\int_{\pi/3}^{\pi/2}\sin((2m+1)\theta)\dd\theta\\
&=\frac{2}{2m+1}\cos\frac{(2m+1)\pi}{3}
=\frac{\xi^{2m+1}+\overline{\xi}^{\,2m+1}}{2m+1}.
\end{aligned}
\end{equation*}
This proves \eqref{eq:2.32}. Equation \eqref{eq:2.33} now follows
from \eqref{eq:2.30}--\eqref{eq:2.32}, completing the proof.
\end{proof}

\medskip
\section{Proof of Theorem~\ref{thm:half}}

Let $p>3$ be a prime and set
\[
S:=
\sum_{j=1}^{(p-1)/2}
\frac{\binom{2j}{j}}{j^3}
\left(\frac{1}{j^2}+H_j^{(2)}\right).
\]
For $1\le j\le(p-1)/2$, define
\[
\varphi_j(z):=
\frac{1}{j^2-z}
\prod_{k=1}^{j}\frac{1}{k^2-z},
\]
and set
\begin{equation}\label{eq:3.1}
F(z):=2\sum_{j=1}^{(p-1)/2}(2j-1)!\,\varphi_j(z).
\end{equation}
Expanding the factors in \eqref{eq:3.1} around $z=0$, the coefficient of $z$
in the Taylor expansion of $F(z)$ is exactly $S$. Hence the residue of $F(z)/z^2$ at $z=0$ is
\[
\Res_{z=0}\frac{F(z)}{z^2}=S.
\]
The only other finite poles of $F(z)/z^2$ are $z=t^2$,
$1\le t\le(p-1)/2$. Moreover,
\[
\frac{F(z)}{z^2}=O(z^{-4}),
\qquad|z|\to\infty,
\]
and hence the residue at infinity vanishes. The residue theorem therefore yields
\begin{equation}\label{eq:3.5}
S=-\sum_{t=1}^{(p-1)/2}\Res_{z=t^2}\frac{F(z)}{z^2}.
\end{equation}

\subsection{The residue at \texorpdfstring{\(z=t^2\)}{z=t\string^2}}

Fix $1\le t\le (p-1)/2$. Since $\varphi_j(z)/z^2$ is holomorphic at
$z=t^2$ for $j<t$, only the summands with $j\ge t$ contribute to the
residue of $F(z)/z^2$ at $z=t^2$: the term with $j=t$ has a double pole, whereas those
with $j>t$ have simple poles.

\textup{(i)} If \(j=t\), then
\begin{equation}\label{eq:3.7}
	\varphi_t(z)
	=
	\frac{1}{t^2-z}\prod_{k=1}^{t}\frac{1}{k^2-z}
	=
	\frac{1}{(z-t^2)^2}
	\prod_{k=1}^{t-1}\frac{1}{k^2-z}.
\end{equation}
Set
\begin{equation}\label{eq:3.8}
	\alpha(z)
	:=
	\frac{1}{z^2}
	\prod_{k=1}^{t-1}\frac{1}{k^2-z}.
\end{equation}
Since $\alpha$ is holomorphic near $t^2$, the residue of the double
pole is $\alpha'(t^2)$.
From \eqref{eq:3.8},
\begin{equation*}
	\alpha\left( t^{2} \right) = \frac{1}{t^{4}}\prod_{k = 1}^{t - 1}\frac{1}{k^{2} - t^{2}} = \frac{(-1)^{t - 1}}{t^{4}}\prod_{k = 1}^{t - 1}\frac{1}{(t - k)(t + k)} = \frac{(-1)^{t-1}}{t^3(2t-1)!}.
\end{equation*}
Logarithmically differentiating \eqref{eq:3.8}, we obtain
\begin{equation*}
	\alpha'(z) = \alpha(z)\left( - \frac{2}{z} + \sum_{k = 1}^{t - 1}\frac{1}{k^{2} - z} \right) = - \alpha(z)\left( \frac{2}{z} + \sum_{k = 1}^{t - 1}\frac{1}{z - k^{2}} \right).
\end{equation*}
At \(z=t^2\),
\begin{equation*}
	\frac{2}{t^{2}} + \sum_{k = 1}^{t - 1}\frac{1}{t^{2} - k^{2}} = \frac{2}{t^{2}} + \frac{1}{2t}\sum_{k = 1}^{t - 1}\left( \frac{1}{t - k} + \frac{1}{t + k} \right) = \frac{2}{t^{2}} + \frac{1}{2t}\left( H_{2t} - \frac{1}{t} - \frac{1}{2t} \right) = \frac{5}{4t^{2}} + \frac{H_{2t}}{2t}.
\end{equation*}
Hence
\begin{equation}\label{eq:3.10}
\begin{aligned}
\alpha'(t^2)
=-\alpha(t^2)\left(\frac{2}{t^2}+\sum_{k=1}^{t-1}\frac{1}{t^2-k^2}\right)
=\frac{1}{(2t-1)!}
\left(\frac{5(-1)^t}{4t^5}+\frac{(-1)^tH_{2t}}{2t^4}\right).
\end{aligned}
\end{equation}
Using \eqref{eq:3.7}--\eqref{eq:3.10}, 
the residue contributed by the term with index $t$ is
\begin{equation}\label{eq:3.11}
\begin{aligned}
\Res_{z=t^2}\frac{2(2t-1)!\varphi_t(z)}{z^2}
=2(2t-1)!\alpha'(t^2)
=\frac{5(-1)^t}{2t^5}+\frac{(-1)^tH_{2t}}{t^4}.
\end{aligned}
\end{equation}

\textup{(ii)} If \(j>t\), then
\begin{equation}\label{eq:3.12}
	\varphi_j(z)
	=
	\frac{1}{z-t^2}
	\frac{-1}{j^2-z}
	\prod_{\substack{1\le k\le j\\ k\ne t}}
	\frac{1}{k^2-z}.
\end{equation}
Set
\begin{equation}\label{eq:3.13}
	\beta(z)
	:=
	\frac{1}{z^2}
	\frac{-1}{j^2-z}
	\prod_{\substack{1\le k\le j\\ k\ne t}}
	\frac{1}{k^2-z}.
\end{equation}
Substituting $z=t^2$, we obtain
\begin{equation}\label{eq:3.14}
	\beta\left( t^{2} \right) = \frac{1}{t^{4}}\frac{- 1}{j^{2} - t^{2}}\prod_{\substack{1\le k\le j\\k\ne t}}\frac{1}{k^{2} - t^{2}} = \frac{(-1)^{t}2}{t^{2}\left( j^{2} - t^{2} \right)}\frac{1}{(j - t)!(j + t)!}.
\end{equation}
Since $\beta$ is holomorphic at $t^2$, \eqref{eq:3.12}--\eqref{eq:3.14} give 
the residue contributed by the term with index $j$ 
\begin{equation}\label{eq:3.15}
\Res_{z=t^2}\frac{2(2j-1)!\,\varphi_j(z)}{z^2}
=2(2j-1)!\beta\left( t^{2} \right) 
=\frac{2(-1)^t}{jt^2(j^2-t^2)}\binom{2j}{j-t}.
\end{equation}

Combining the contributions in \eqref{eq:3.11} and \eqref{eq:3.15}, we obtain
\begin{equation}\label{eq:3.16}
\begin{aligned}
\Res_{z=t^2}\frac{F(z)}{z^2}
&=\frac{5(-1)^t}{2t^5}
+\frac{2(-1)^t}{t^2}
\left(
\frac{H_{2t}}{2t^2}
+\sum_{j=t+1}^{(p-1)/2}
\frac{1}{j(j^2-t^2)}\binom{2j}{j-t}
\right).
\end{aligned}
\end{equation}
Set \(m=(p-1)/2-t\). Then
\begin{equation}\label{eq:t}
	t = \frac{p - 1}{2} - m \equiv - \frac{2m + 1}{2}\pmod{p}
\end{equation}
and
\begin{equation}\label{eq:H_{2t}}
	H_{2t} = H_{p - 1 - 2m} = H_{p - 1} - \sum_{k = 1}^{2m}\frac{1}{p - k} \equiv H_{2m}\pmod{p}.
\end{equation}
We first note that
\[
\binom{2t+2k}{k}
=
\binom{p-(2m+1-2k)}{k}
\equiv
(-1)^k\binom{2m-k}{k}
\pmod p.
\]
Therefore, after reindexing by $j=t+k$ and using \eqref{eq:t}, we obtain
\[
\begin{aligned}
	\sum_{j=t+1}^{(p-1)/2}
	\frac{1}{j(j^2-t^2)}
	\binom{2j}{j-t}
	&=
	\sum_{k=1}^{m}
	\frac{1}{(t+k)k(2t+k)}
	\binom{2t+2k}{k}\\
	&\equiv
	2\sum_{k=1}^{m}
	\frac{(-1)^k}
	{k(2m+1-k)(2m+1-2k)}
	\binom{2m-k}{k}
	\pmod p.
\end{aligned}
\]
Using the partial-fraction decomposition
\[
\frac{1}
{k(2m+1-k)(2m+1-2k)}
=
\frac{1}{(2m+1)^2}
\left(
\frac{1}{k}
-
\frac{1}{2m+1-k}
+
\frac{4}{2m+1-2k}
\right),
\]
This decomposition expresses the sum in terms of the quantities
$A_m$, $B_m$, and $C_m$ introduced in Lemma~\ref{lem:roots}. 
Hence, by Lemma~\ref{lem:roots} and \eqref{eq:t}--\eqref{eq:H_{2t}}, we obtain
\begin{equation}\label{eq:key}
	\begin{aligned}
		\sum_{j=t+1}^{(p-1)/2}
		\frac{1}{j(j^2-t^2)}
		\binom{2j}{j-t}
		&\quad\equiv
		\frac{2}{(2m+1)^2}
		\left[
		A_m
		-
		\left(B_m-\frac{1}{2m+1}\right)
		+
		4\left(C_m-\frac{1}{2m+1}\right)
		\right]\\
		&\quad=
		\frac{2}{(2m+1)^2}
		\left(
		3\frac{2\cos\frac{(2m+1)\pi}{3}}{2m+1}
		-H_{2m}
		-\frac{3}{2m+1}
		\right)\\
		&\quad\equiv
		-\frac{3}{4t^3}
		\left(
		2\cos\frac{(p-2t)\pi}{3}-1
		\right)
		-\frac{H_{2t}}{2t^2}
		\pmod p.
	\end{aligned}
\end{equation}
Since \(p-2t\) is odd and \(p-2t\equiv0\pmod3\) if and only if 
\(t\equiv-p\pmod3\), we have
\[
2\cos\frac{(p-2t)\pi}{3}
=
\begin{cases}
	-2, & t\equiv-p\pmod3,\\
	1,  & t\not\equiv-p\pmod3.
\end{cases}
\]
Combining this with \eqref{eq:3.16} and \eqref{eq:key}, we obtain
\begin{equation}\label{eq:3.20}
	\begin{aligned}
		\Res_{z=t^2}\frac{F(z)}{z^2}
		-
		\frac{5(-1)^t}{2t^5}
		&\equiv
		-\frac{3(-1)^t}{2t^5}
		\left(
		2\cos\frac{(p-2t)\pi}{3}-1
		\right)\\
		&\equiv
		\begin{cases}
			0 \pmod p,
			& t\not\equiv-p\pmod3;\\[2mm]
			\dfrac{9(-1)^t}{2t^5} \pmod p,
			& t\equiv-p\pmod3.
		\end{cases}
		\end{aligned}
\end{equation}

\subsection{Completion of the proof}
Combining \eqref{eq:3.5} and \eqref{eq:3.20}, we obtain
\begin{equation*}
S =-\sum_{t=1}^{(p-1)/2}\Res_{z=t^2}\frac{F(z)}{z^2} 
\equiv - \frac{5}{2}\sum_{t = 1}^{(p-1)/2}{\frac{(-1)^{t}}{t^{5}} -}\frac{9}{2}\sum_{\substack{1 \leq t \leq (p-1)/2 \\ t \equiv - p\pmod{3}}}{\frac{(-1)^{t}}{t^{5}}\ }\pmod{p}.
\end{equation*}
Applying Lemma~\ref{lem:harmonic} and \eqref{eq:2.4}, we obtain
\begin{equation}\label{eq:3.21}
S \equiv  - \frac{4}{27}H_{(p-1)/2}^{(5)} \equiv \frac{8}{9}B_{p - 5}\pmod{p}.
\end{equation}
For \(1\le j\le(p-1)/2\), Lemma~2.1 in~\cite{sun2011} gives
\begin{equation*}
j\binom{2j}{j}\binom{2p - 2j}{p - j} \equiv - 2p\pmod{p^2}.
\end{equation*}
Hence
\begin{equation*}
\frac{- 2p}{j\binom{2p - 2j}{p - j}} \equiv \binom{2j}{j}\pmod{p}.
\end{equation*}
Therefore, by the definition of $S$,
\begin{equation*}
- 2\sum_{j = 1}^{(p-1)/2}\frac{p}{j^{4}\binom{2p - 2j}{p - j}}\left( \frac{1}{j^{2}} + H_{j}^{(2)} \right) \equiv \sum_{j = 1}^{(p-1)/2}\frac{\binom{2j}{j}}{j^{3}}\left( \frac{1}{j^{2}} + H_{j}^{(2)} \right) = S \equiv \frac{8}{9}B_{p - 5}\pmod{p}.
\end{equation*}
This proves Theorem~\ref{thm:half}.

\section{Proof of Theorem~\ref{thm:main}}
Let $p>5$ be a prime. Since $\binom{2k}{k}$ is a $p$-adic unit for
$1\le k\le (p-1)/2$, the corresponding summands vanish modulo $p$.
Hence
    \begin{equation*}
		\sum_{k = 1}^{p - 1}{\frac{p}{k^{4}\binom{2k}{k}}\left( \frac{1}{k^{2}} - H_{k - 1}^{(2)} \right)} \equiv \sum_{k = (p + 1)/2}^{p - 1}{\frac{p}{k^{4}\binom{2k}{k}}\left( \frac{1}{k^{2}} - H_{k - 1}^{(2)} \right)}\pmod{p}.
    \end{equation*}
 For $1\le j\le (p-1)/2$, it follows from \eqref{eq:H25} that
\begin{equation*}
H_{p-j-1}^{(2)}
=H_{p-1}^{(2)}-\sum_{k=1}^{j}\frac{1}{(p-k)^2}
\equiv -H_j^{(2)}\pmod p.
\end{equation*}
Reindexing the remaining terms with $k\ge(p+1)/2$ 
by $k=p-j$
 and using 
Theorem~\ref{thm:half}, we obtain
\begin{equation}\label{eq:4.1}
	\begin{aligned}
	\sum_{k = 1}^{p - 1}{\frac{p}{k^{4}\binom{2k}{k}}\left( \frac{1}{k^{2}} - H_{k - 1}^{(2)} \right)} &\equiv  \sum_{j = 1}^{(p - 1)/2}\frac{p}{(p - j)^{4}\binom{2p - 2j}{p - j}}\left( \frac{1}{(p - j)^{2}} - H_{p - j - 1}^{(2)} \right) \\
	&\equiv \sum_{j = 1}^{(p-1)/2}\frac{p}{j^{4}\binom{2p - 2j}{p - j}}\left( \frac{1}{j^{2}} + H_{j}^{(2)} \right) 
	\equiv - \frac{4}{9}B_{p - 5}\pmod{p}.
	\end{aligned}
\end{equation}

For \(1\le k\le p-1\), we have
\begin{equation*}
	\begin{aligned}
		(-1)^{k - 1}\frac{1}{k}\binom{p + k - 1}{2k - 1} 
		&= (-1)^{k - 1}\frac{2p}{(2k)!}\prod_{j = 1}^{k - 1}\left( p^{2} - j^{2} \right) 
		&\equiv \frac{2p}{k^{2}\binom{2k}{k}}\left( 1 - p^{2}H_{k - 1}^{(2)} \right)\pmod{p^3}.
	\end{aligned}
\end{equation*}
Since $p>5$ is prime, we have $p\equiv\pm1\pmod6$. 
Applying Lemma~\ref{lem:En} with $n=p$, we obtain
\begin{equation*}
	\frac{E_{p}}{p} = \sum_{k = 1}^{p - 1}(-1)^{k - 1}\frac{\binom{p + k - 1}{2k - 1}}{k\left( k^{2} - p^{2} \right)} \equiv \sum_{k = 1}^{p - 1}\frac{2p}{k^{4}\binom{2k}{k}}\left( 1 + \frac{p^{2}}{k^{2}} - p^{2}H_{k - 1}^{(2)} \right)\pmod{p^3}.
\end{equation*}
By \eqref{eq:4.1}, we have
\begin{equation}\label{eq:4.2}
	\frac{E_{p}}{p} \equiv \sum_{k = 1}^{p - 1}\frac{2p}{k^{4}\binom{2k}{k}}- \frac{8}{9}p^{2}B_{p - 5}\pmod{p^3}.
\end{equation}
On the other hand,
\begin{equation*}
E_{p} = \sum_{k = 1}^{p - 1}\frac{1}{p^{2} - k^{2}} \equiv - \sum_{k = 1}^{p - 1}{\frac{1}{k^{2}}\left( 1 + \frac{p^{2}}{k^{2}} \right)} = - H_{p - 1}^{(2)} - p^{2}H_{p - 1}^{(4)}\pmod{p^4}.
\end{equation*}
By \eqref{eq:2.3} and \eqref{eq:2.5}, we have
\begin{equation}\label{eq:4.3}
\frac{E_{p}}{p} \equiv - \frac{H_{p - 1}^{(2)}}{p} - pH_{p - 1}^{(4)}\  \equiv \frac{2H_{p-1}}{p^{2}} + H_{p - 1}^{(3)}\  \equiv \frac{2H_{p-1}}{p^{2}} - \frac{6}{5}p^2B_{p-5}\pmod{p^3}.
\end{equation}
Comparing \eqref{eq:4.2} and \eqref{eq:4.3} gives
\begin{equation*}
\sum_{k = 1}^{p - 1}\frac{p}{k^{4}\binom{2k}{k}} - \frac{H_{p - 1}}{p^{2}} \equiv- \frac{7}{45}p^2B_{p-5}\pmod{p^{3}}.
\end{equation*}
Dividing by \(p\) yields \eqref{eq:sun},
and the proof of Theorem~\ref{thm:main} is complete.

\end{document}